\documentclass[11pt,a4paper]{article}
\usepackage{mathrsfs}
\usepackage{amsmath,amsthm,color,graphicx,eepic}
\usepackage{amssymb} \usepackage{verbatim}
\usepackage{dsfont}
\usepackage{graphicx}
\usepackage{subcaption}

\usepackage{hyperref}

\setlength{\textwidth}{172mm} \setlength{\oddsidemargin}{-5mm}
\setlength{\evensidemargin}{7mm} \setlength{\topmargin}{-20mm}
\setlength{\textheight}{245mm}
\usepackage{mathtools} 

\newtheorem{theorem}{Theorem}

\newtheorem{lemma}[theorem]{Lemma}

\newtheorem{claim}{Claim}
\newtheorem*{claim*}{Claim}

\newcommand{\B}{\mathcal{B}}
\newcommand{\ex}{{\rm ex}}

\newcommand{\floor}[1]{\left\lfloor{#1}\right\rfloor}

\newcommand{\ind}[1]{\mathds{1}_{\text{odd}(#1)}}
\baselineskip 15pt


\begin{document}

\title{The Connected Bipartite Turán Problem for Long Cycles and Paths}
\author{
Zhen He\thanks{School of Mathematics and Statistics, Beijing Jiaotong University, Beijing, China.} \and
Nika Salia\thanks{Theoretical Computer Science Department, Faculty of Mathematics and Computer Science, Jagiellonian University, Krak\'{o}w, Poland.} \and
Xiutao Zhu\thanks{School of Mathematics, Nanjing University of Aeronautics and Astronautics, Nanjing, China.} \and
}
\date{}
\maketitle

\begin{abstract}
Caro, Patkós, and Tuza initiated a systematic study of the bipartite Turán number for trees, and in particular asked for the extremal number of edges in connected bipartite graphs with prescribed color-class sizes that contain no paths of given lengths.
In this paper, we determine these numbers exactly and describe all corresponding extremal configurations.
Our approach first establishes a more general result for long cycles: we determine the exact structure of all 2-connected bipartite graphs with no cycle of length at least a given constant.
The proof combines Kopylov’s method for long cycles with a strengthened version of Jackson’s classical lemma, in which every extremal configuration is characterized.

To highlight the applicability of our results, we conclude with applications yielding concise proofs of classical theorems on bipartite Turán numbers, notably rederiving the results of Gyárfás, Rousseau, and Schelp for paths and Jackson for long cycles.
\end{abstract}

\section{Introduction}

A central question in extremal combinatorics is to determine, for a given graph~$F$, the largest number of edges in an $n$-vertex graph that contains no subgraph isomorphic to~$F$.  
This maximum, denoted by $\ex(n, F)$, is known as the \emph{Turán number} of~$F$.  
The general asymptotic behavior of $\ex(n, F)$ for non-bipartite graphs was established by Erd\H{o}s, Stone, and Simonovits~\cite{erdos1946structure,erdHos1965limit}, yet the bipartite case remains far from fully understood and continues to attract considerable attention, see the survey by F{\"u}redi and Simonovits~\cite{furedi2013history}.
Even for some of the most natural bipartite graphs, fundamental problems remain open—for instance, the celebrated Erd\H{o}s–Sós~\cite{erd1964extremal} conjecture asserts that every $n$-vertex graph avoiding a given $k$-vertex tree has at most $(k-2)n/2$ edges, a bound known only for special classes of trees and limited values of~$k$.

When the forbidden graph is a path, the exact Turán numbers were determined by Erd\H{o}s and Gallai~\cite{erdHos1959maximal}.  
In fact, they first determined the exact Turán number for the family of long cycles $ C_{\ge \ell}$, from which the result for paths followed immediately.
Originally, we intended to replicate this approach in the bipartite setting—deriving the result for paths as a corollary of the corresponding statement for long cycles.
However, an unexpected phenomenon arises: the extremal numbers for consecutive odd and even paths coincide, while the corresponding extremal constructions differ, rendering this reduction inapplicable for even paths.
Erd\H{o}s and Gallai proved that an $n$-vertex graph containing no $P_k$, a path on $k$ vertices, has at most $\tfrac{1}{2}(k-2)n$ edges, with equality attained by the disjoint union of copies of $K_{k-1}$.  
A natural refinement of this problem, motivated by the disconnectedness of the extremal example, is to impose connectivity on the host graph.  
This connected variant, typically denoted by $\ex_c(n, F)$, asks for the maximum number of edges in an $n$-vertex \emph{connected} graph without a copy of~$F$.  
For paths, the connected version was determined by Kopylov~\cite{kopylov1977maximal}.
Kopylov obtained the extremal number for $2$-connected graphs forbidding long cycles and used this to derive the exact connected Turán number for paths.  
Our approach follows the same underlying philosophy and closely parallels Kopylov’s methods: we first determine the extremal number and characterize all $2$-connected bipartite graphs without long cycles, and then deduce the corresponding result for connected bipartite graphs forbidding long paths.
Further refinements and stability results for connected settings were later obtained by Balister, Győri, Lehel, and Schelp~\cite{balister2008connected} and by Füredi, Kostochka, Luo, and Verstraëte~\cite{furedi2018stability,furedi2016stability}.

Caro, Patkós, and Tuza~\cite{caro2024connected} initiated a systematic study of the connected Turán number for trees, determining its exact value for all trees on at most six vertices and introducing several general lower-bound constructions based on structural parameters of~$F$.
Building on this, Jiang, Liu, and Salia~\cite{jiang2024connectivity} resolved a question posed in that work by establishing that, asymptotically, the connectivity constraint can at most halve the conjectured extremal number for trees.

Most recently, Caro, Patkós, and Tuza~\cite{caro2025bipartite} extended this line of study to the bipartite setting, introducing the bipartite Turán number $\ex_b(a,b, F)$ and its connected analogue $\ex_{b,c}(a,b, F)$.
For a bipartite graph $F$ and integers $1 \le a \le b$, they defined $\ex_b(a,b,F)$ as the maximum number of edges in an $F$-free bipartite graph with part sizes $a$ and $b$, and $\ex_{b,c}(a,b,F)$ as the corresponding connected version. 
They established general bounds relating these quantities to the classical Turán number, determined exact values for several families of trees, and initiated the study of extremal structures within connected bipartite graphs.

The study of long cycles in bipartite graphs with prescribed part sizes has a rich history.  
Jackson~\cite{jackson1981cycles,jackson1985long} proved a series of sharp results on the existence of long cycles in $2$-connected bipartite graphs, see Theorem~\ref{thm:jackson}, later extended by Jackson and Li~\cite{jackson1994hamilton} to Hamilton cycles in regular bipartite graphs.  
Their work provided powerful degree-based criteria that serve as a tool for most modern approaches to bipartite cycle problems.
The first sharp results on paths in bipartite graphs were obtained by Gyárfás, Rousseau, and Schelp~\cite{gyarfas1984extremal}, who determined the exact extremal bounds for paths in bipartite graphs under various degree and part-size constraints, see Theorem~\ref{thm:GRS}.  
More recently, Kostochka, Luo, and Zirlin~\cite{kostochka2020super} extended Jackson’s results to the hypergraph setting and proved superpancyclicity theorems for bipartite graphs and uniform hypergraphs, see also~\cite{kostochka2022longest}.

Our work determines the exact connected bipartite Turán numbers for paths and long cycles, together with a full structural characterization of the extremal graphs.
Studying connected Turán problems in the bipartite setting for paths and long cycles without fixing the color-class sizes would yield little new insight, as the resulting bounds closely mirror those of the general (non-bipartite) case and follow almost directly from known extremal results.
In contrast, prescribing the sizes of the two color classes makes the problem substantially richer: the extremal constructions must balance the bipartition while maintaining connectivity, often forcing one part to contain many vertices of degree one or two—a seemingly inefficient and highly non-uniform configuration that nevertheless turns out to be optimal.

Independent of our work, Bonamy, Leclere, and Picavet~\cite{Bonamy2025} investigated the same problem for connected bipartite graphs avoiding long paths. They determined the exact maximum number of edges in a bipartite connected graph as a function of the length of its longest path and the sizes of its color classes, and also discussed a particular family of trees, referred to as brooms. Since both studies were developed concurrently and address overlapping questions, we agreed to submit our papers independently, without exchanging drafts. Consequently, while the main objectives coincide, the precise scope and techniques of their work were not known to us at the time of submission.

In general, forbidding a single cycle is a much harder extremal problem—often even the correct order of magnitude of the extremal function is unknown.  
However, when the forbidden cycle is sufficiently long, the problem becomes more tractable, as excluding one very long cycle is asymptotically close to forbidding all long cycles, and the corresponding extremal numbers nearly coincide.  
In this direction, Győri~\cite{gyiiri1997ch} conjectured and partially proved results on bipartite graphs forbidding very long cycles in terms of the part sizes.  
This conjecture was recently solved by Li and Ning~\cite{li2021exact}, who determined the exact bipartite Turán numbers for large even cycles, thereby refining Jackson’s earlier bounds.

\section{Main Results and Structure of the Paper}

In this section, we present our results on connected bipartite Turán numbers for paths and long cycles.
As discussed in the introduction, these problems extend the classical theorems of Erd\H{o}s–Gallai and Kopylov to the bipartite setting with prescribed color-class sizes, and answer questions of Caro, Patkós, and Tuza.

We first address the case of long cycles and obtain exact results for bipartite graphs without cycles of length at least a given constant.
Since cycles are $2$-connected, requiring connectivity adds no further restriction, and the natural setting for this problem is that of $2$-connected bipartite graphs.
Accordingly, our first main theorem determines the precise extremal number and characterizes all such $2$-connected bipartite graphs.

\begin{theorem}\label{Thm:2_connected_bipartite_Cycle_free}
Let $G$ be a $2$-connected $C_{\ge 2\ell}$-free bipartite graph with bipartition classes of sizes $a$ and $b$, where $b \ge a \ge \ell\geq 4$. Then
\[
|E(G)| \le \frac{\ell}{2}(a + b)
           + \left(\frac{\ell}{2} - 2\right)(b - a - 4) - 4.
\]
Equality holds precisely for the unique bipartite graph $B_{2}(a,b,2\ell)$ obtained from $K_{\ell - 2,\, b}$ by adding $a-(\ell-2)$ independent vertices, each adjacent to the same two vertices in the part of size $b$, see Figure\ref{Fig:B_2}.
\end{theorem}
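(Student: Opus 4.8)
The plan is to reduce the statement to a clean extremal inequality about ``excess degrees'' and to prove it by a bipartite adaptation of Kopylov's disintegration, using the strengthened form of Jackson's lemma (Theorem~\ref{thm:jackson}) as the engine that converts local density into a long cycle. First I would record a convenient reformulation. Since $G$ is $2$-connected, every vertex has degree at least $2$, so writing $\deg$ for degrees in $G$,
\[
|E(G)| = \sum_{v\in A}\deg(v) = 2a + \sum_{v\in A}\bigl(\deg(v)-2\bigr),
\]
and a direct expansion shows the asserted bound is equivalent to $\sum_{v\in A}(\deg(v)-2)\le(\ell-2)(b-2)$. The target graph $B_2(a,b,2\ell)$ is exactly the configuration in which $\ell-2$ vertices of $A$ are complete to $B$ (contributing $(\ell-2)(b-2)$ to this excess) and every remaining vertex of $A$ has degree exactly $2$ (contributing nothing). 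Thus the whole theorem amounts to showing that the total excess degree concentrated on $A$ cannot exceed $(\ell-2)(b-2)$, and that saturating this bound forces the stated structure.

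The conceptual core is a \emph{hub principle}: $A$ contains at most $\ell-2$ vertices of large degree. Indeed, suppose a set $S\subseteq A$ of $\ell-1$ vertices all had degree large enough that the strengthened Jackson lemma, applied to the $2$-connected bipartite graph spanned by $S$ and $B$, produces a cycle through all of $S$; this cycle already has length $2(\ell-1)$. Because $a\ge\ell>\ell-1$, there is a further vertex of $A$ outside $S$, and $2$-connectivity of $G$ supplies two internally disjoint paths from it to the cycle, which I would use to reroute the cycle through this extra vertex and lengthen it to at least $2\ell$, contradicting $C_{\ge 2\ell}$-freeness. The difficulty is that the hub principle alone does not bound the excess, since vertices of \emph{medium} degree (between $3$ and roughly $\ell-2$) may also carry excess; this is precisely where Kopylov's disintegration enters. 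I would fix a threshold $t$ of order $\ell/2$, iteratively delete vertices whose current degree falls below $t$ while bookkeeping the edges removed, and argue that the surviving core $R$ is either too small to matter or is a $2$-connected bipartite graph meeting the degree hypotheses of the strengthened Jackson lemma; in the latter case, provided the smaller side of $R$ retains at least $\ell$ vertices, the lemma forces a cycle of length at least $2\ell$, a contradiction. Consequently $R$ is pushed into one of finitely many extremal shapes, and summing the edges shed during disintegration against the edges of $R$ returns exactly $2a+(\ell-2)(b-2)$.

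As with the classical Kopylov theorem, the disintegration naturally produces two candidate extremal families—one concentrating the excess on a pair of hubs (giving $B_2$) and one distributing it over roughly $\ell/2$ hubs—and I would verify that in the prescribed range $b\ge a\ge\ell\ge4$ the first strictly dominates, so that $B_2(a,b,2\ell)$ is the unique maximiser; this is where the hypotheses $a\ge\ell$ and $\ell\ge4$ are used. For the equality statement I would feed the saturated profile back through the two lemmas: the hub principle forces exactly $\ell-2$ vertices of $A$ complete to $B$, leaving $a-(\ell-2)$ vertices of degree exactly $2$, and then $C_{\ge 2\ell}$-freeness forces these degree-$2$ vertices to share a common pair of neighbours in $B$—for if two of them attached to independent pairs, both could be spliced into a cycle through the $\ell-2$ hubs to create a $C_{2\ell}$. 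This pins the graph down to $B_2(a,b,2\ell)$.

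I anticipate two main obstacles. The first, and the genuine technical heart, is establishing the strengthened Jackson lemma itself, namely a version of Jackson's circumference bound in which every extremal configuration is characterised, since the uniqueness half of the theorem rests entirely on that characterisation rather than on the inequality. The second is the bipartite bookkeeping inside the disintegration: the two colour classes play asymmetric roles—the small side $A$ sheds its degree-$2$ vertices while the large side $B$ must be retained—so the threshold and the stopping analysis must be tuned with care, and maintaining $2$-connectivity (or controlling the blocks that arise) while deleting low-degree vertices requires the usual delicacy that makes Kopylov-type arguments subtle.
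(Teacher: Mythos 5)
Your reformulation of the bound as $\sum_{v\in A}(\deg(v)-2)\le(\ell-2)(b-2)$ is correct, and your broad strategy (Kopylov-style disintegration plus a strengthened Jackson lemma with its extremal configurations characterised) is indeed the strategy of the paper. However, two of your key steps have genuine gaps. First, the ``hub principle'' does not work as described: (a) the subgraph spanned by $S$ and $B$ need not be $2$-connected, since deleting $A\setminus S$ can destroy connectivity; (b) Jackson's lemma is a statement about a \emph{maximal path} and the degrees of its two endpoints — it produces a long cycle, not a cycle through a prescribed set $S$ of $\ell-1$ vertices, so it cannot be ``applied to produce a cycle through all of $S$''; and (c) even granting such a cycle of length $2(\ell-1)$, the rerouting step fails numerically: two internally disjoint paths from an outside vertex land on the cycle at vertices $x\neq y$, and the new cycle consists of both paths plus \emph{one} arc of the old cycle, so its length is at least the longer arc plus $2$, i.e.\ at least $\ell+1$ and at most $2\ell-2$ in the best case — never the required $2\ell$. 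This step cannot be repaired by a local argument; it is exactly the kind of jump that Jackson's lemma (correctly applied to a long maximal path) is designed to replace.

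Second, your disintegration plan omits the ingredient that makes Jackson's lemma usable at all: the lemma bounds the circumference by $\min\{m-\ind{m},\,2(d(u)+d(v)-1-\ind{m})\}$, so it is vacuous unless one first guarantees a maximal path with $m\ge 2\ell$. The paper secures this by passing to an edge-maximal ($C_{\ge 2\ell}$-saturated) graph: if the core $G'$ is not complete bipartite, saturation forces a path on at least $2\ell$ vertices with both ends in $G'$, and if $G'$ \emph{is} complete bipartite a separate counting/second-core argument takes over. Your proposal never invokes saturation, so the case analysis cannot start. Relatedly, you plan to apply the lemma \emph{inside} the core $R$ and flag its possible failure of $2$-connectivity as an open difficulty; the paper sidesteps this entirely by applying the lemma to maximal paths in the ambient $2$-connected graph $G$ whose endpoints merely lie in the core (so their degrees in $G$ inherit the core's lower bound). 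With saturation and this ``path in $G$, endpoints in $G'$'' device added, and the hub principle discarded in favour of the extremal-configuration analysis of the lemma (which, as you correctly anticipate, forces structures like $K_{\ell-1,b}$ that contradict $a\ge\ell$), your outline would essentially become the paper's proof.
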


Note that the conditions on the parameters are natural.  
If $a < \ell$, then the maximum is trivially attained by the complete bipartite graph $K_{a,b}$.  
On the other hand, if $\ell \le 3$, there exists no $2$-connected bipartite graph without a cycle of length at least $4$, and the only $2$-connected bipartite graph that is $C_{\ge 6}$-free is $K_{2,b}$.

\begin{figure}[!ht]
    \centering
    \begin{subfigure}{0.4\textwidth}
    \includegraphics[width=\textwidth]{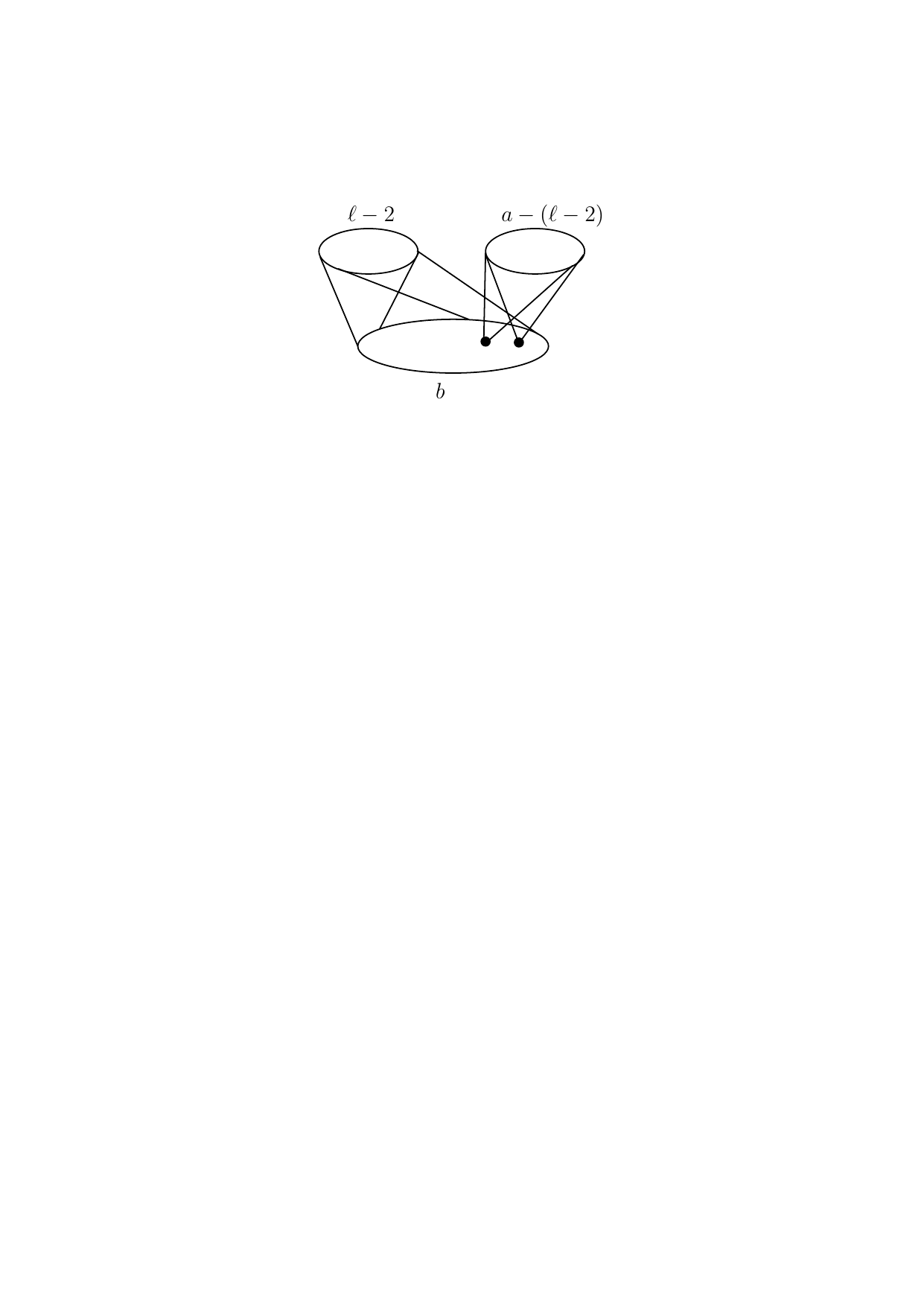}
    \caption{$B_2(a,b,2\ell)$.}
    \label{Fig:B_2}
    \end{subfigure}
    \hfill
\begin{subfigure}{0.4\textwidth}
    \centering
    \includegraphics[width=\textwidth]{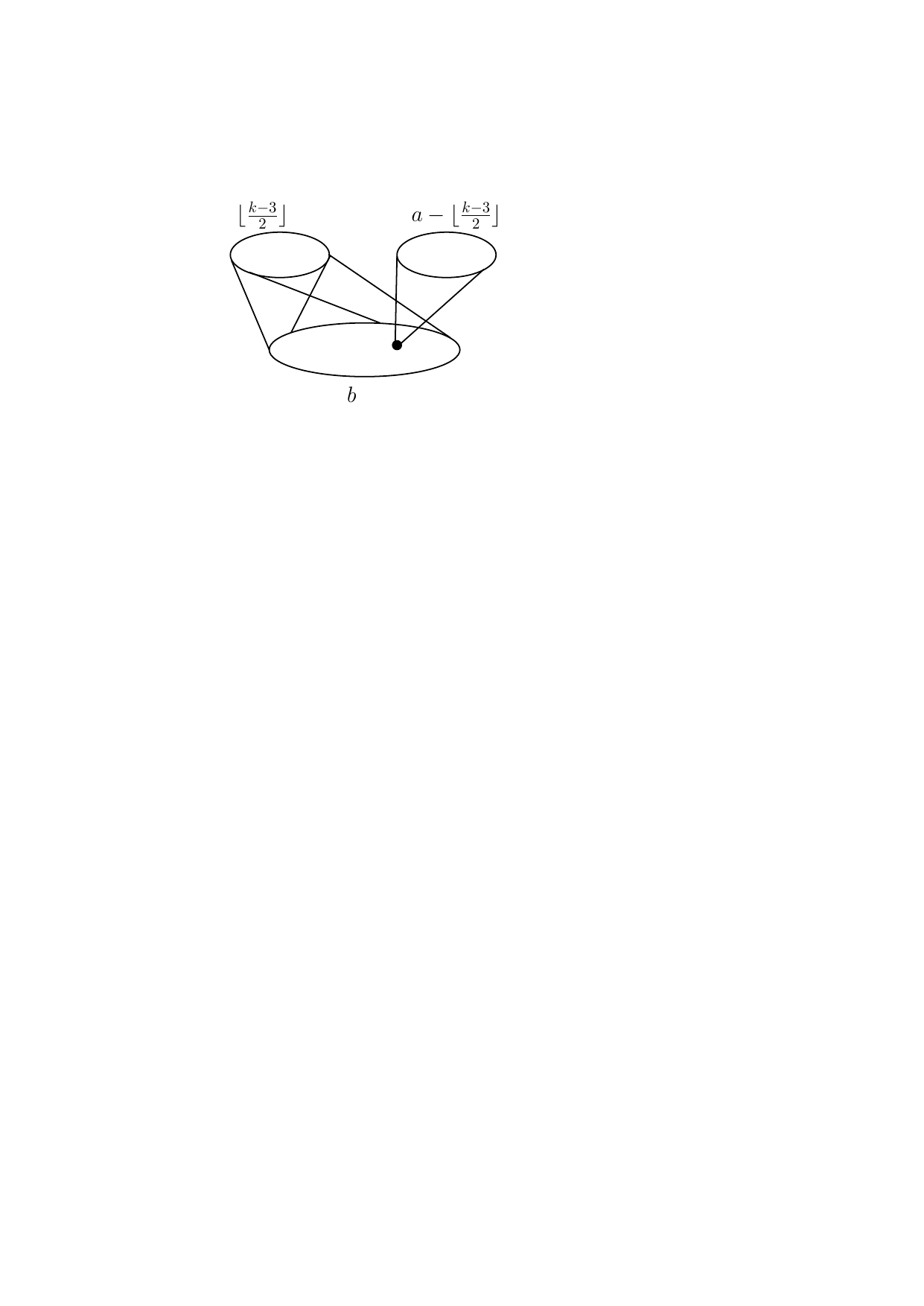}
    \caption{$B_1(a,b,k)$.}
    \label{Fig:B_1}
    \end{subfigure}
    \caption{Extremal bipartite graphs $B_2(a,b,2\ell)$ (2-connected) and $B_1(a,b,k)$ (connected).}
    \label{Fig:Extremal_2-connected_Const}

\end{figure}
 
As an immediate consequence, we obtain the exact connected bipartite Turán numbers for paths, thereby resolving a question posed by Caro, Patkós, and Tuza.

\begin{theorem}\label{Thm:Connected_Path_Bipartite}
Let $G$ be a connected $P_{k}$-free bipartite graph with bipartition classes of sizes $a$ and $b$, where $b \ge a \ge \tfrac{k}{2} \geq 4$.  
Then
\[
|E(G)| \le \big\lfloor\tfrac{k-3}{2}\big\rfloor b + \big(a - \big\lfloor\tfrac{k-3}{2}\big\rfloor\big).
\]
Equality holds precisely for the unique bipartite graph $B_{1}(a,b,k)$ obtained from $K_{\lfloor (k-3)/2 \rfloor,\, b}$ by adding $a - \lfloor (k-3)/2 \rfloor$ independent vertices, each adjacent to the same vertex in the part of size~$b$ when $k$ is odd, see Figure~\ref{Fig:B_1}.  
If $k$ is even, these degree-one vertices may be incident with arbitrary vertices of the color class of size~$b$, we denote this class of graphs by $\B_{1}(a,b,k)$ .
\end{theorem}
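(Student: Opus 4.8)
The plan is to deduce Theorem~\ref{Thm:Connected_Path_Bipartite} from the $2$-connected result Theorem~\ref{Thm:2_connected_bipartite_Cycle_free} by peeling pendant vertices one at a time and then applying a single-vertex ``cone'' reduction. Write $\ell=\ceil{k/2}$ and $s=\floor{(k-3)/2}$; since $s=\ell-2$, the target bound is $sb+(a-s)$. The basic observation is that a $P_k$-free graph contains no cycle on $k$ or more vertices (a cycle on $m$ vertices contains $P_m$), so a $P_k$-free bipartite graph is $C_{\ge 2\ell}$-free, with $2\ell$ the least even integer that is $\ge k$.

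I would first verify the lower bound. The graph $B_1(a,b,k)$, namely $K_{s,b}$ together with $a-s$ pendant vertices, has exactly $sb+(a-s)$ edges. A longest path of $K_{s,b}$ has $2s+1$ vertices and both of its ends in the part of size $b$; hence when $k$ is odd, placing all pendants on one vertex of that part lengthens a longest path by at most one, to $2s+2=k-1$, so $B_1$ is $P_k$-free. When $k$ is even the core already leaves room ($2s+1=k-3$), so pendants may be attached to arbitrary vertices of the large part while a longest path still has only $2s+3=k-1$ vertices; this freedom is precisely what gives rise to the family $\B_1(a,b,k)$.

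For the upper bound I would induct on the number of vertices, peeling a degree-one vertex $v$ whenever one exists. If $v$ is in the part of size $a$, then $G-v$ is connected and $P_k$-free with parts $(a-1,b)$, and the inductive bound gives $\abs{E(G)}\le sb+(a-s)$ with equality transmitted; if $v$ is in the part of size $b$, the analogous estimate gives $\abs{E(G)}\le sb+(a-s)-(s-1)$, a strict inequality since $s\ge 2$, so pendants on the large side can never be extremal. This leaves the case of minimum degree at least two. There I would adjoin an apex $v^{*}$ to the large part, joined to every vertex of the small part; minimum degree two makes $G+v^{*}$ a $2$-connected bipartite graph with parts $(a,b+1)$. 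A cycle through $v^{*}$ comes from a path of $G$ between two small-part vertices, which has an odd number of vertices, at most $2\ell-3$ when $k$ is odd; thus $G+v^{*}$ is still $C_{\ge 2\ell}$-free. Applying Theorem~\ref{Thm:2_connected_bipartite_Cycle_free} to $G+v^{*}$ and subtracting the $a$ edges at $v^{*}$ returns exactly $sb+(a-s)$. Moreover the inequality is strict here: equality would force $G+v^{*}$ to be the extremal graph of Theorem~\ref{Thm:2_connected_bipartite_Cycle_free}, but deleting the apex from that graph re-creates pendant vertices, contradicting minimum degree two. Hence equality in Theorem~\ref{Thm:Connected_Path_Bipartite} propagates only through small-part pendants down to a $K_{s,b}$ core, which pins down $B_1(a,b,k)$ for odd $k$.

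The main obstacle is the even case, where the cone reduction fails: a longest small-part-to-small-part path may now have $k-1$ vertices, so $v^{*}$ creates a cycle on $k=2\ell$ vertices and $G+v^{*}$ is only $C_{\ge 2\ell+2}$-free, overshooting the target by $b-1$. To handle it I would split on whether $G$ is $P_{k-1}$-free: if it is, the already-established odd case for $k-1$ (which has the same parameter $s$ and the same bound) applies directly; if not, a longest path has exactly $k-1$ vertices, and I would analyse its end-vertices and their neighbourhoods directly, in the manner of Kopylov's rotation arguments and the strengthened Jackson lemma underlying Theorem~\ref{Thm:2_connected_bipartite_Cycle_free}, to exclude any graph with more than $sb+(a-s)$ edges and to read off that the pendants may be placed arbitrarily, yielding $\B_1(a,b,k)$. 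I expect this direct even-case analysis to be the most delicate part; the finitely many boundary configurations with $a=\ell$, where peeling from the small part would drop it below $\ell$, are dealt with by a separate direct degree count.
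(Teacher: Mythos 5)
Your odd-case argument is essentially the paper's: cone the graph by an apex adjacent to the whole small class, note that the result is $2$-connected and $C_{\ge k+1}$-free (a path between two small-class vertices has odd order at most $k-2$), apply Theorem~\ref{Thm:2_connected_bipartite_Cycle_free}, subtract $a$, and read off $B_1(a,b,k)$ from the equality case. One self-inflicted problem there: because you also peel pendants from the \emph{small} class, your induction needs a base at $a=\lceil k/2\rceil$, where peeling drops the small class below the theorem's threshold. These are not ``finitely many boundary configurations'' ($b$ is unbounded), and the bound you would need for classes of sizes $(\lceil k/2\rceil-1,\,b)$ is exactly a case of the Gy\'arf\'as--Rousseau--Schelp theorem, i.e.\ a genuine sub-problem that a ``separate direct degree count'' does not supply. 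This is avoidable: like the paper, peel pendants only from the large class (that is all $2$-connectivity of the coned graph requires); small-class pendants become degree-two vertices after adding the apex and cause no harm.

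The genuine gap is the even case, which is precisely where the paper's work is concentrated. Your reduction ``if $G$ is $P_{k-1}$-free, quote the odd case'' is fine as far as it goes (modulo the edge case $k=8$, where the odd statement for $k-1=7$ lies outside the range in which it is proved), but in the complementary case --- $G$ contains a path on $k-1$ vertices but none on $k$ --- you offer only an intention (``analyse its end-vertices \dots in the manner of Kopylov's rotation arguments''), not an argument. That case is the heart of the matter: the apex trick overshoots by $b-1$ exactly because a $(k-1)$-vertex path with both ends in the small class creates a $C_{k}$ through the apex. The paper's treatment of this case occupies most of its proof and needs several ingredients your proposal does not anticipate: a counting claim showing an extremal $G$ is in fact $C_{\ge 2\ell-2}$-free (where $k=2\ell$); passage to a minimal subgraph $G'$ obtained by deleting not only degree-one vertices but also pairs of vertices, one from each class, of degree sum at most $\ell-1$; a lower bound $x\ge\lfloor(\ell+1)/2\rfloor$ on the circumference $2x$ of $G'$ via Jackson's theorem; a blockwise application of Theorem~\ref{Thm:2_connected_bipartite_Cycle_free} when every other block is $C_{2(\ell-x)}$-free, which is where the family $\B_{1}(a,b,k)$ with arbitrarily placed pendants actually emerges; and, in the remaining case, a delicate analysis of two cycles $C_{2x}$ and $C_{2(\ell-x)}$ sharing a cut vertex, using the apex trick on $G'$ to produce a path on $2\ell-1$ vertices with both ends in the cut vertex's class and deriving a contradiction from the defining property of $G'$. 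Without an argument of this kind (or a genuinely new one), the upper bound and the characterization of $\B_{1}(a,b,k)$ for even $k$ remain unproven, so the proposal as it stands establishes only the odd half of the theorem.
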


Note that the conditions on the parameters are again natural.
Interestingly, the extremal number in Theorem~\ref{Thm:Connected_Path_Bipartite} takes the same value for paths of consecutive odd and even lengths, a phenomenon that does not occur in the classical (non-bipartite) setting.

In Subsection~\ref{Sub_Sec:Sketch}, we present a brief overview of the proof strategy.
 In  Subsection~\ref{Sub_Sec_Jack}, we present our main technical tool — Jackson’s Lemma — which can be viewed as an extension of Dirac’s classical lemma for $2$-connected graphs extended for bipartite settings.
Informally, it states that a maximal path in a graph,
 guarantees the existence of a long cycle, whose length is at least the degree sum of the path’s endpoints or equal to the path length itself.
Jackson extended this result to bipartite graphs.
However, in the bipartite setting, an additional negative constant term appears in the lower bound on the cycle length, depending on the length of the path, which complicates the analysis.
To address this, we revisit Jackson’s original argument and give a characterization of extremal configurations.
In the Subsection~\ref{Sec:Proof}, we prove Theorem~\ref{Thm:2_connected_bipartite_Cycle_free} and in Subsection~\ref{Sub:Corollary} deduce Theorem~\ref{Thm:Connected_Path_Bipartite} as its corollary. 
In Section~\ref{sec:Applications}, we apply our main theorems to derive brief proofs of known results on the bipartite Turán numbers for paths and long cycles, and we also announce a forthcoming work that applies our result to determine a lower bound of a parameter.

\subsection{Overview of the Proof Strategy}\label{Sub_Sec:Sketch}

We prove Theorem~\ref{Thm:2_connected_bipartite_Cycle_free} by considering an edge-maximal $C_{\ge 2\ell}$-free bipartite graph with the largest possible number of edges — that is, a graph in which adding any additional edge creates a long cycle.
We identify a large subgraph with high minimum degree and show that, unless it is a complete bipartite graph, the desired bound follows directly from Jackson’s lemma or from the structural properties we derive.
If the subgraph happens to be complete, we construct another large subgraph with slightly smaller minimum degree and apply Jackson’s lemma once more to obtain the required contradiction.

For Theorem~\ref{Thm:Connected_Path_Bipartite}, we begin with the standard argument based on the similarity of the two constructions: removing a single vertex from a $2$-connected extremal graph yields the extremal connected graph without long paths.
However, this reasoning works only when the forbidden path has an odd length.
When the forbidden path has even length, we adopt a different approach: decompose the graph into its $2$-connected blocks, analyze their structures separately, and apply induction on the number of vertices.
We note that the proof for the even case also implies the result for the odd case.
Nevertheless, since the direct argument for the odd case is shorter and somewhat relevant, we include it separately.

\subsection{Notions and Tools}\label{Sub_Sec_Jack}

Throughout the paper, all graphs are simple, finite, and undirected.  
For a graph $G$, we denote by $V(G)$ and $E(G)$ its vertex and edge sets, respectively.  
The degree of a vertex $v$ in $G$ is denoted by $d_G(v)$, and $\delta(G)$ stands for the minimum degree of~$G$.  
For $U \subseteq V(G)$, we write $G[U]$ for the subgraph induced by the vertices from~$U$.  

A graph $G$ is called \emph{bipartite} if its vertex set can be partitioned into two independent sets $A$ and $B$; in this case, we write $G(A,B)$, and refer to $A$ and $B$ as its \emph{color classes}.  
A graph is \emph{$2$-connected} if it is connected and remains connected after deleting any single vertex.  
For a positive integer $\ell$, we write $C_{\ge 2\ell}$ for the family of all cycles of length at least $2\ell$, and $P_k$ for the path on $k$ vertices.

We use $N_G(v)$ to denote the neighborhood of a vertex $v$ in $G$, and for a maximal path $P=v_1v_2\dots v_m$, we write  
\[
N^+(v_m) := \{v_{i+1} : v_i \in N(v_m),\, i < m\}, \qquad
N^-(v_1) := \{v_{i-1} : v_i \in N(v_1),\, i > 1\}.
\]
These notations are consistent with those used in Jackson’s lemma and its subsequent refinements.

We conclude this subsection with the statement and a brief proof sketch of Jackson’s lemma, which serves as the central tool in our arguments.

\begin{lemma}[Jackson~\cite{jackson1985long}]\label{Lemma_Jackson}
    Let $G$ be a $2$-connected bipartite graph, and let $P_m$ be a maximal path in $G$, with $u,~v$ terminal vertices.
    Then $G$ contains a cycle of length at least
  \[
  \min\left\{m-\ind{m},2\left(d(u)+d(v)-1-\ind{m}\right)\right\}.
  \]
\end{lemma}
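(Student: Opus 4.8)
The plan is to run the classical rotation/crossing-chord method for longest paths, adapted to the bipartite setting where the parity of $m$ dictates the color classes of the relevant neighbourhoods. First I would fix a maximal path $P = v_1 v_2 \cdots v_m$ with terminal vertices $v_1, v_m$; maximality forces $N(v_1), N(v_m) \subseteq V(P)$, so every neighbour of an endpoint sits at some position along $P$. Recording these positions, $N(v_1)$ lies entirely in the color class of $v_2$ (the even positions), while $N(v_m)$ lies in the even positions when $m$ is odd and in the odd positions when $m$ is even. This single parity observation is what ultimately produces the correction term $\ind{m}$, since it governs whether the two endpoints share a color class.

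The argument then splits according to whether the endpoint neighbourhoods ``cross''. If $m$ is even, I would compare $N(v_1)$ with $N^+(v_m)$, both of which lie in the same color class along $P$: whenever they meet, say $v_{i+1}\in N(v_1)$ with $v_i\in N(v_m)$, the two path segments close up into the spanning cycle $v_1 v_{i+1}\cdots v_m v_i v_{i-1}\cdots v_1$ of length $m$, which already attains $m-\ind{m}=m$. When $m$ is odd, $v_1$ and $v_m$ share a color class, $N(v_1)$ and $N^+(v_m)$ fall into opposite classes and cannot meet; here I would instead intersect $N(v_1)$ with the twice-shifted set $\{v_{i+2}: v_i \in N(v_m)\}$, a nonempty intersection now yielding a cycle that skips exactly one vertex and so has length $m-1 = m-\ind{m}$. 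The shift by two rather than one is precisely the bipartite penalty responsible for the extra $-1$ in the degree term when $m$ is odd.

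If no such crossing exists, the (suitably shifted) images of $N(v_1)$ and $N(v_m)$ are disjoint subsets of a single color class of $P$, which has roughly $m/2$ vertices; counting then gives $d(v_1)+d(v_m)\le m/2 + O(1)$, so that $2\left(d(v_1)+d(v_m)-1-\ind{m}\right) \le m-\ind{m}$ and the minimum in the statement is attained by the degree term. It remains to exhibit a cycle of length at least $2\left(d(v_1)+d(v_m)-1-\ind{m}\right)$. The idea is to select the crossing pair of chords from $v_1$ and $v_m$ minimizing the number of skipped path vertices: since $v_1 v_2$ and $v_{m-1}v_m$ are forced edges, the neighbourhoods of the endpoints straddle $P$, and a pigeonhole on the interleaving of their positions should produce a crossing whose cycle collects $d(v_1)+d(v_m)-1-\ind{m}$ vertices in each color class, hence has the required even length.

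The main obstacle is this last construction, and in particular the sub-case where the positions of $N(v_1)$ lie entirely to the left of those of $N(v_m)$, so that no direct crossing is available. This is exactly where $2$-connectivity becomes indispensable: a chord or a bypassing path guaranteed by Menger's theorem must jump over the intervening segment, and rerouting along it either restores a crossing or contradicts the maximality of $P$. Making this rerouting quantitative, so that the recovered cycle still meets the bound $2\left(d(v_1)+d(v_m)-1-\ind{m}\right)$, together with the careful parity bookkeeping needed to keep the $\ind{m}$ term exact, is the delicate heart of the proof. I expect the subsequent strengthening to a full characterization of the extremal configurations to follow from tracking precisely when each of these inequalities is tight.
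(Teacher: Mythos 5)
Your overall plan coincides with the paper's: classify the relative positions of $N(v_1)$ and $N(v_m)$ along the maximal path, use a tight crossing to produce the cycle of length $m-\ind{m}$, and use disjointness of monochromatic (shifted) neighbourhoods on a crossing cycle to produce the degree bound. But there is a genuine gap, and it sits exactly where you yourself place ``the delicate heart of the proof'': the case in which every neighbour of $v_1$ precedes every neighbour of $v_m$ along $P$ (in the paper's notation, $i\le j$). In that case no crossing pair of chords exists at all, so your pigeonhole step --- ``since $v_1v_2$ and $v_{m-1}v_m$ are forced edges, the neighbourhoods of the endpoints straddle $P$, and a pigeonhole \dots should produce a crossing'' --- cannot work: the forced edges $v_1v_2$ and $v_{m-1}v_m$ give precisely a \emph{non}-crossing configuration, and no counting argument creates a crossing that is not there. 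Your fallback, that a Menger path ``either restores a crossing or contradicts the maximality of $P$'', is also not sound: by maximality all neighbours of $v_1$ and $v_m$ lie on $P$, so any bypassing path has both endpoints on $P$; rerouting along it produces a path that omits a tail of $P$, hence contradicts neither maximality (which, note, is non-extendability, not maximum length) nor creates a chord crossing. What is actually needed --- and what the paper does --- is to use $2$-connectivity to build a single cycle through the two end-cycles $v_1v_2\cdots v_iv_1$ and $v_jv_{j+1}\cdots v_mv_j$ that contains \emph{all} of the pairwise disjoint sets $N(v_1)$, $N(v_m)$ and $N^+(v_m)$; since $N(v_1)$ lies in the same color class as one of the other two, such a cycle has length at least $2\left(d(v_1)+d(v_m)\right)-2\ind{m}$, which is even stronger than required. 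Constructing that cycle, i.e.\ choosing the connecting paths so that no neighbour of $v_1$ or $v_m$ is bypassed, is the substantive content of this case and is absent from your proposal.

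A secondary point: in the interleaved case your count only works if you make explicit that the crossing pair $(i',j')$, $j'<i'$, is chosen to minimize $i'-j'$. Minimality is what guarantees that when $i'-j'\ge 3$ the set $N(v_1)$ is disjoint from $N^+(v_m)$ (and from $N^{++}(v_m)$ when $m$ is odd), and the crossing cycle then contains $N(v_1)$ together with $N^+(v_m)\setminus\{v_{j'+1}\}$ (respectively $N^{++}(v_m)\setminus\{v_{j'+2}\}$), which is exactly where the constant $-1-\ind{m}$ in the statement comes from. You gesture at this but never pin down the disjointness, and without it the assertion that the cycle ``collects $d(v_1)+d(v_m)-1-\ind{m}$ vertices in each color class'' is unsupported.
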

\begin{proof}[Sketch of the proof]
This is a standard lemma with a well-known proof~\cite{jackson1985long}; therefore, we only sketch it here to illustrate the main idea and clarify the conditions under which the minimum is achieved.  

Let $P_m := v_1v_2 \dots v_m$ be a maximal path in $G$.  
Let $i$ be the largest and $j$ the smallest indices such that $v_1v_i \in E(G)$ and $v_m v_j \in E(G)$.

If $i < j$, then there is a cycle containing all the vertices from 
$N(v_1)$, $N^+(v_m)$, and $N(v_m)$.  
The sets $N(v_1)$, $N^+(v_m)$, and $N(v_m)$ are pairwise disjoint, and $N(v_1)$ lies in the same color class as one of them.  
Hence the cycle has length at least $2(d(v_1) + d(v_m))$, since 
$|N(v_1)| = d(v_1)$ and $|N^+(v_m)| = |N(v_m)| = d(v_m)$.  

If $i = j$, then $m$ is odd, and by the previous argument, the cycle has length at least 
$2(d(v_1) + d(v_m)) - 2$, since $N(v_1)$ and $N(v_m)$ share one vertex.

If $i > j$, let us choose a pair $(i', j')$, $i'>j'$ that minimizes $i' - j'$ among $v_1v_{i'} \in E(G)$ and $v_m v_{j'} \in E(G)$. 
If $i' - j'\in \{1,2\}$ then there is a cycle of length $m-\ind{m}$.  
Otherwise, $N(v_1)$ is disjoint from both $N^+(v_m)$ and $N^{++}(v_m)$, note that $|N^{++}(v_m)|=d(v_m)-1$, as $v_mv_{m-1}\in E(G)$.
There exists a cycle containing the disjoint monochromatic sets $N(v_1)$ and 
$N^{+}(v_m) \setminus \{v_{j'+1}\}$ if $m$ is even, or 
$N^{++}(v_m) \setminus \{v_{j'+2}\}$ if $m$ is odd.  
Thus, the length of this cycle is at least $2(d(v_1) + d(v_m) - 1-\ind{m})$.  
The vertices $v_x$, for $x \in [j'+1, i'-1]$, belong to none of the sets 
$N(v_1)$, $N^{+}(v_m)$, or $N^{++}(v_m)$, and the cycle contains all vertices of $P_m$ except those.  
Hence, either there exists a cycle of length $2(d(v_1) + d(v_m) - 1)$, or there is no other choice for the minimal pair $(i', j')$ with the same properties.  
In summary, if no longer cycle exists than $2(d(v_1) + d(v_m) - 2)$, then the path length $m$ is odd and
\[
N(v_1) = \{v_2, \dots, v_j\} \cup \{v_{i'}, \dots, v_i\},
\qquad
N(v_m) = \{v_j, \dots, v_{j'}\} \cup \{v_i, \dots, v_{m-1}\},
\]
see also the Figure~\ref{Fig:Extremal_Config}.

\begin{figure}[!ht]
    \centering
    \includegraphics[width=0.6\textwidth]{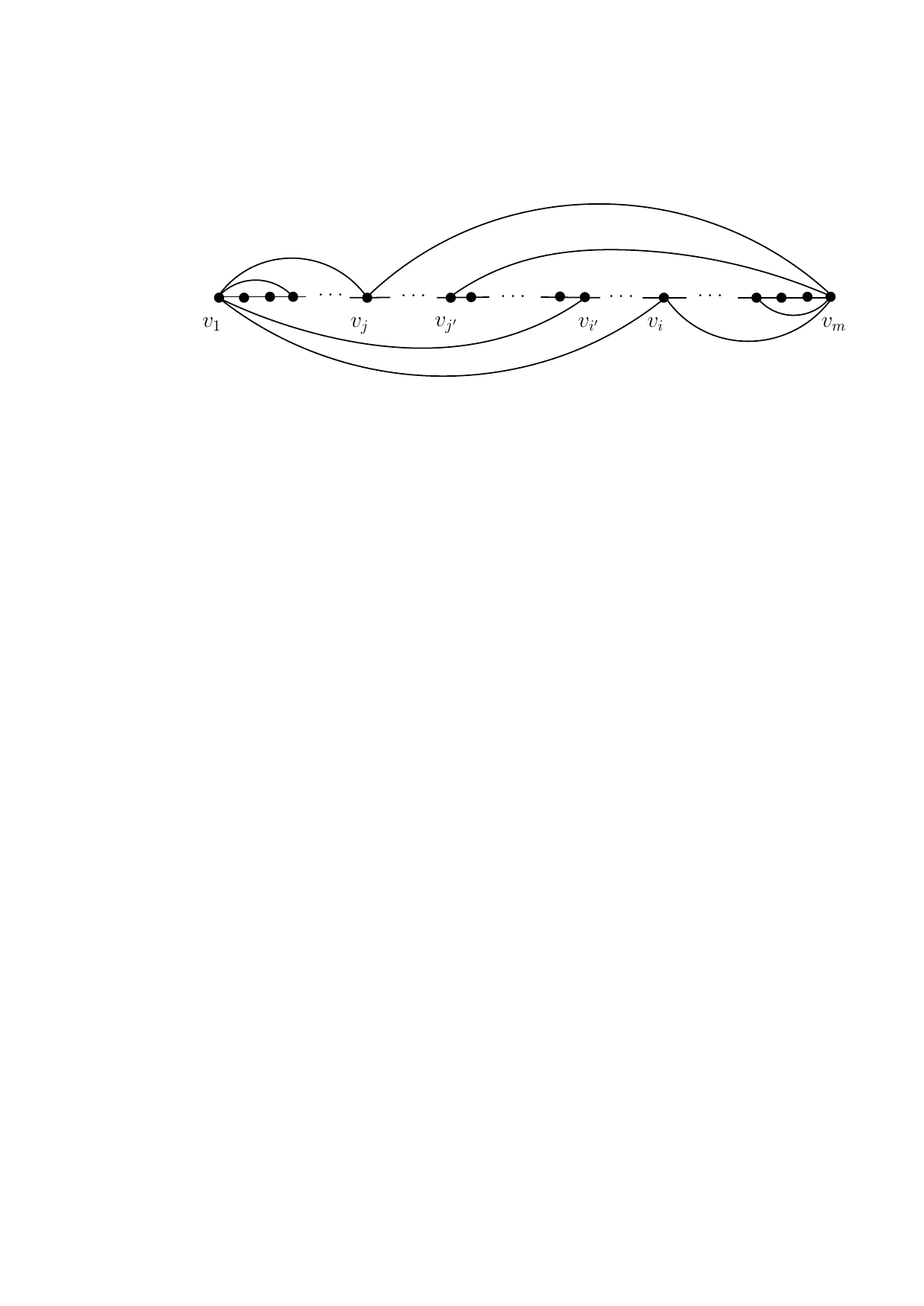}
    \caption{Schematic structure of the extremal configuration arising in Jackson’s Lemma,
showing the neighborhoods of the terminal vertices of a longest path in a 2-connected bipartite graph with circumference of length $2(d(v_1) + d(v_m) - 2)$.}

    \label{Fig:Extremal_Config}
\end{figure}

\end{proof}

\section{Proofs of the Main Results}\label{sec:proofs}

In this section, we prove our two main theorems.
In Subsection~\ref{Sec:Proof}, we establish the extremal result for $2$-connected bipartite graphs without long cycles (Theorem~\ref{Thm:2_connected_bipartite_Cycle_free}).
In Subsection~\ref{Sub:Corollary}, we then deduce the corresponding result for connected bipartite graphs without long paths (Theorem~\ref{Thm:Connected_Path_Bipartite}) as a consequence.

\subsection{Two Connected Bipartite Graphs Without Long Cycles}\label{Sec:Proof}
\begin{proof}[Proof of Theorem~\ref{Thm:2_connected_bipartite_Cycle_free}]
Assume that
\[
|E(G)| \ge \frac{\ell}{2}(a + b) + \left(\frac{\ell}{2} - 2\right)(b - a - 4) - 4.
\]  
We further assume, without loss of generality, that $G$ is an edge-maximal bipartite graph with respect to being $C_{\ge2\ell}$-free, that is, adding any additional edge to $G$ creates a cycle of length at least $2\ell$.

The average degree of $G$ satisfies
\[
\overline{d}
:= \frac{\tfrac{\ell}{2}(a + b)
      + \left(\tfrac{\ell}{2} - 2\right)(b - a - 4) - 4}{a + b}
> \frac{\ell}{2} - 1.
\]
Let $G'$ be the subgraph of $G$ obtained by repeatedly removing vertices
of degree at most $\tfrac{\ell}{2}$.
Note first that $G'$ is nonempty for every even $\ell \ge 4$.
Indeed, suppose to the contrary that $G'$ becomes empty.
Then, at some stage of the deletion process, one color class has size $\tfrac{\ell}{2}$,
while the other has size $x > \tfrac{\ell}{2}$.
At that moment, we obtain
\[
|E(G)|
\le x \cdot \frac{\ell}{2}
   + \frac{\ell}{2}\bigl(a + b - \tfrac{\ell}{2} - x\bigr)
\le \frac{\ell}{2}(a + b)
   + \Bigl(\frac{\ell}{2} - 2\Bigr)(b - a - 4) - 4.
\]
The second inequality can only be tight when $\ell = 4$; for every  $\ell > 4$, this contradicts our assumption on $|E(G)|$.
When $\ell = 4$, either we again obtain a contradiction, or at the last stage the graph consists of a copy of $K_{2,x}$, $x \ge 3$, and every vertex removed previously had degree exactly~$2$.

In the latter situation, let $v_1$ and $v_2$ be the two vertices of the smaller color class of this $K_{2,x}$, and let $v_3$ be the last vertex deleted from this color class.
In the opposite color class (of size $x \ge 3$), let $u_1$ and $u_2$ be the common neighbours of $\{v_1, v_2, v_3\}$, and let $u_3$ be another vertex in that class.
By $2$-connectivity, there is no edge in $G\bigl[V(G)\setminus \{v_1, v_2, v_3, u_1, u_2, u_3\}\bigr]$,
otherwise we could find a cycle of length at least $8$, which is a contradiction. Thus we have $G \cong B_2(a,b,8)$.

If $\ell$ is odd, $G'$ is not empty by the same argument in a simpler form, as the corresponding inequality above is then strict.

Therefore, we may assume $G'$ is not empty and 
\[
\delta(G') \ge \left\lfloor \tfrac{\ell + 2}{2} \right\rfloor .
\]
Let $P_m := v_1v_2 \dots v_m$ be a longest path in $G$ whose terminal vertices $v_1, v_m$ belong to $V(G')$.
Note that if $G'$ is not a complete bipartite graph, then by the edge-maximality of $G$, we have $m \ge 2\ell$.
From here, we distinguish two cases: the first when $m \ge 2\ell$, and the second when $G'$ is a complete bipartite graph.

\textbf{Case 1.} $m \ge 2\ell$.

From Lemma~\ref{Lemma_Jackson}, we conclude that either $G$ contains a cycle of length at least $2\ell$, yielding a contradiction, or $m$  and $\ell$ are odd integers.  
In the latter case, by the sketch of the proof of Lemma~\ref{Lemma_Jackson}, we have
\[
N(v_1) = \{v_2, \dots, v_j\} \cup \{v_{i'}, \dots, v_i\},
\qquad
N\textbf{}(v_m) = \{v_j, \dots, v_{j'}\} \cup \{v_i, \dots, v_{m-1}\},
\]
Note that $j > 2$, since otherwise we obtain a cycle 
$v_m v_2 v_3 \dots v_m$, of length at least $2\ell$, a contradiction.  
Observe also that $v_2 \in V(G')$.

In this paragraph, we show that 
$N_{G'}(v_2)\cap V(P_m)\subseteq N_{G'}^-(v_1)\setminus \{v_{i'-1}\}$ 
and that $N_{G'}(v)=N_{G'}(v_1)$ for all $v\in N_{G'}(v_2)$.
First, observe that if 
$N_{G'}(v_2)\cap V(P_m)\subseteq N_{G'}^-(v_1)\setminus \{v_{i'-1}\}$, 
then, since $d(v_1)=\tfrac{\ell+1}{2}\le d(v_2)$, 
there exists a vertex $v_1' \in N_{G'}(v_2)\setminus V(P_m)$.  
This gives another longest path 
$P'_m := v_1'v_2\dots v_m$ in $G$ 
whose terminal vertices belong to $V(G')$.  
Hence, either we reach a contradiction, or $N_{G'}(v_1') = N_{G'}(v_1)$.  
Moreover, every vertex in 
$V(P_m)\cap N_{G'}^-(v_1)\setminus \{v_{i'-1}\}$ 
has neighbors along $P_m$ that are also neighbors of $v_1$, 
so these vertices can be exchanged with $v_1'$.  
Thus, without loss of generality, we may assume that 
$N_{G'}(v_2)\cap V(P_m) = N_{G'}^-(v_1)\setminus \{v_{i'-1}\}$.
The following arguments show that indeed 
$N_{G'}(v_2)\cap V(P_m)\subseteq N_{G'}^-(v_1)\setminus \{v_{i'-1}\}$.
First, if $N_{G'}(v_2)\cap N_{G'}^+(v_m)\neq \emptyset$, 
then $G$ contains a cycle of length $m-1$, a contradiction.  
If $v_2v_{i'-1}\in E(G)$, then 
\[
v_2v_3\dots v_{j-2}v_1v_i v_{i+1}\dots v_m v_j v_{j+1}\dots v_{i-1}v_2
\]
is a cycle of length at least $2\ell$, again a contradiction. 

The subgraph $G[N_{G'}(v_2)\cup N_{G'}(v_1)]$ is complete bipartite, 
as obtained in the previous paragraph.
Applying the same argument symmetrically to $v_m$ and $v_{m-1}$, we see that 
$G[N_{G'}(v_{m-1})\cup N_{G'}(v_m)]$ is also complete bipartite.  
Hence, $G$ contains two copies of 
$K_{\frac{\ell + 1}{2},\, \frac{\ell + 1}{2}}$ 
sharing two vertices $v_i$ and $v_j$ in the same color class.  
By the $2$-connectivity and edge-maximality of $G$, it follows that 
\[
G \cong K_{\ell - 1,\, b},
\]
contradicting the assumption $a \ge \ell$.

\textbf{Case 2.} $G' \cong K_{c,d}$, $\frac{\ell+1}{2}\leq c \leq d$.

We have $c < \ell - 1$, otherwise we can find a copy of $C_{2\ell}$ since $a \ge \ell$ and $G$ is $2$-connected.  
Let $G''$ be the subgraph of $G$ obtained by repeatedly removing vertices of degree at most $\ell - c$.  
Clearly, $G' \subseteq G'' \subseteq G$, and $\delta(G'') \ge \ell - c + 1$.  
We now consider two cases: $G'' \cong G'$ and $G'$ being a proper subgraph of $G''$.

If $G'' \cong G'$, then
\[
|E(G)| \le cd + (a + b - c - d)(\ell - c)
      \le (\ell - 2)b + (a - \ell + 2)\cdot 2 \le |E(G)|.
\]
The second inequality follows by a straightforward substitution of $d = b$ and $c = \ell - 2$.  
Equality in this inequality holds only when $d = b$, $c = \ell - 2$, and thus $G \cong B_{2}(a,b,2\ell)$.

If $G' \ne G''$, then by the edge-maximality of $G$ there exists a maximal path $P_m := v_1 v_2 \dots v_m$ in $G$ of length $m \ge 2\ell$, 
where $v_1 \in V(G')$ and $v_m \in V(G'')$.  
By Lemma~\ref{Lemma_Jackson}, either $G$ contains a cycle of length $2\ell$, which is impossible, 
or $m$ is odd and
\[
N_{G'}(v_1) = \{v_2, \dots, v_j\} \cup \{v_{i'}, \dots, v_i\},
\qquad
N_{G''}(v_m) = \{v_j, \dots, v_{j'}\} \cup \{v_i, \dots, v_{m-1}\}.
\]
As in the previous case, the same reasoning shows that 
$G[N_{G'}(v_2)\cup N_{G'}(v_1)]$ and 
$G[N_{G''}(v_{m-1})\cup N_{G''}(v_m)]$ 
are complete bipartite graphs sharing two vertices $v_i$ and $v_j$ in the same color class.  
Since 
\[
|N_{G'}(v_2)\cup N_{G''}(v_{m-1})| \ge \ell + 1 
\quad\text{and}\quad 
|N_{G'}(v_1)\cup N_{G''}(v_m)| \ge \ell - 1,
\]
and $G$ is a $2$-connected $C_{\ge 2\ell}$-free graph, we conclude that
\[
G \cong K_{\ell - 1,\, b},
\]
contradicting the assumption that $a \ge \ell$.
\end{proof}

\subsection{Connected Bipartite Graphs Without Long Paths}\label{Sub:Corollary}

\begin{proof}[Proof of Theorem~\ref{Thm:Connected_Path_Bipartite}]
We provide separate proofs for even and odd values of $k$.

\textbf{Case 1.}  $k = 2\ell + 1$ for some integer $\ell \ge 4$.

Remove all vertices of degree one from the color class $B$, and let the resulting connected graph be $G'$ with color class sizes $a$ and $b'$.  
We will show that Theorem~\ref{Thm:Connected_Path_Bipartite} holds for $G'$.  
This immediately implies $G' \cong G$, by a straightforward argument considering the cases $a \le b'$ and $a > b'$.  
Hence, without loss of generality, we assume $G' \cong G$ and proceed to prove Theorem~\ref{Thm:Connected_Path_Bipartite} for $G$.

Let $G''$ be the graph obtained from $G$ by adding a universal vertex to the color class $B$, adjacent to all vertices in the class $A$.  
The graph $G''$ is $2$-connected and contains no cycle of length at least $2\ell + 2$, since $G$ is connected and $P_{2\ell+1}$-free.  
By Theorem~\ref{Thm:2_connected_bipartite_Cycle_free}, we obtain
\[
|E(G)| \le |E(G'')| - a 
      \le (\ell - 1)(b + 1) + 2(a - \ell + 1) - a 
      =\left\lfloor \frac{k - 3}{2} \right\rfloor b + \bigl(a - \left\lfloor \tfrac{k - 3}{2} \right\rfloor \bigr).
\]
Moreover, equality in the second inequality holds when 
$G'' \cong B_2(a, b + 1, 2\ell + 2)$, 
which implies $G \cong B_1(a, b, \ell + 1)$.

\textbf{Case 2.} $k = 2\ell$ for some integer $\ell \ge 4$. 

Assume that $G$ is a $P_{2\ell}$-free bipartite graph with color classes of sizes $\ell \le a \le b$ and 
\[
|E(G)| \ge (\ell - 2)b + a - (\ell - 2).
\]

\begin{claim}\label{claim:2l-2-free}
$G$ is $C_{\ge 2\ell-2}$-free.
\end{claim}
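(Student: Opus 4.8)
The plan is to first reduce the claim to forbidding a single cycle length. Any cycle on $m \ge 2\ell$ vertices contains a path on $2\ell$ vertices, so the hypothesis that $G$ is $P_{2\ell}$-free already rules out every cycle of length at least $2\ell$; since $G$ is bipartite all its cycles are even, and the only even length in the interval $[2\ell-2,\,2\ell)$ is $2\ell-2$. Hence it suffices to show that $G$ contains no cycle of length exactly $2\ell-2$. First I would assume, for contradiction, that $C=u_1u_2\cdots u_{2\ell-2}u_1$ is such a cycle, necessarily a longest cycle of $G$. Because $C$ meets the class $A$ in exactly $\ell-1$ vertices while $a\ge\ell$, some vertex of $A$ lies outside $C$, and connectivity then provides a vertex $z\in R:=V(G)\setminus V(C)$ adjacent to some $u_i\in V(C)$.

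The engine is a rotation--extension analysis around $C$, in the spirit of Lemma~\ref{Lemma_Jackson}. Deleting the edge $u_{i-1}u_i$ yields the path $z,u_i,u_{i+1},\dots,u_{i-1}$ on $2\ell-1$ vertices, with vertex set $V(C)\cup\{z\}$. If its far endpoint $u_{i-1}$ had a neighbour outside $V(C)\cup\{z\}$ we would obtain a path on $2\ell$ vertices, contradicting $P_{2\ell}$-freeness; thus $N(u_{i-1})\subseteq V(C)\cup\{z\}$, and symmetrically $N(u_{i+1})\subseteq V(C)\cup\{z\}$. Performing P\'{o}sa-type rotations at the free end, where each neighbour of the current endpoint on the path produces a new endpoint spanning the same vertex set, I would generate a set of admissible endpoints, every one of which must likewise have all its neighbours inside $V(C)\cup\{z\}$. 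The dichotomy is then clear: either some rotation creates an endpoint adjacent to a vertex of $R\setminus\{z\}$, giving $P_{2\ell}$ and a contradiction, or every reachable endpoint is trapped inside $V(C)\cup\{z\}$.

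It remains to rule out the trapped case, and this is exactly where the edge hypothesis is indispensable; note that a bare cycle $C_{2\ell-2}$ carrying two pendant vertices at cyclic distance at least three is connected, $P_{2\ell}$-free, and contains $C_{2\ell-2}$, so no purely local argument can succeed and the global count must enter. In the trapped case the rotation endpoints and all their neighbours lie within the fixed $(2\ell-1)$-set $V(C)\cup\{z\}$, which confines the chords of $C$ and forces the rest of $R$ to attach to this set through very few vertices; a direct estimate of the edges inside $C$, from $C$ to $R$, and inside $G[R]$ should then give strictly fewer than $(\ell-2)b+a-(\ell-2)$ edges, contradicting the hypothesis. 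A useful sharpening here is that a longest path of $K_{\ell-1,b}$ with $b\ge\ell$ has exactly $2\ell-1$ vertices and both endpoints in the large class, so any dense, $C_{2\ell-2}$-hosting block close to $K_{\ell-1,b}$ immediately yields $P_{2\ell}$ once an additional vertex is attached at such an endpoint --- which is what the edge lower bound guarantees.

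I expect the trapped (equivalently, sparse-attachment) case to be the main obstacle, since bounding its edges requires simultaneously controlling the chords of $C$, the $C$-to-$R$ edges, and the structure of $G[R]$, and doing so tightly enough to fall below the extremal threshold. A secondary difficulty, peculiar to the bipartite setting, is that parity forbids lengthening an even cycle by absorbing a single outside vertex; consequently the rotation argument can only manufacture a longer \emph{path} rather than a longer cycle, removing the shortcut available in the classical Erd\H{o}s--Gallai and Kopylov arguments and making the edge-counting step unavoidable.
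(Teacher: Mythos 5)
Your opening moves are sound and match the paper's implicit setup: cycles of length at least $2\ell$ contain $P_{2\ell}$, so only $C_{2\ell-2}$ needs to be excluded, and connectivity attaches the outside world to any such cycle $C$. But the proof has a genuine gap exactly where you yourself locate the main obstacle: the ``trapped case'' is never actually ruled out. You assert that a direct estimate of the edges inside $C$, from $C$ to $R:=V(G)\setminus V(C)$, and inside $G[R]$ ``should'' fall strictly below $(\ell-2)b+a-(\ell-2)$, but no such estimate is given, and the rotation framework you set up does not supply the ingredients to produce one. Confining the neighbourhoods of rotation endpoints to $V(C)\cup\{z\}$ says nothing about how the remaining $a+b-2\ell+1$ vertices of $R$ attach to $C$, and those $C$-to-$R$ edges are precisely where the dominant term of order $(\ell-2)b$ in a near-extremal graph lives; without a per-vertex degree bound on $R$ the count cannot close.

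The paper closes this gap with three concrete observations absent from your write-up. First, $P_{2\ell}$-freeness plus connectivity forces $G-C$ to contain no edges at all: an edge off the cycle, a connecting path, and a traversal of all of $C$ already give $2\ell$ vertices on a path. This is much stronger than your endpoint-confinement statement and immediately splits $|E(G)|$ into chords of $C$ plus $C$-to-$R$ edges. Second, taking $u\in A\setminus V(C)$ and $v\in B\setminus V(C)$ of maximum degree, the successor and predecessor sets of $N(u)$ along $C$ must be disjoint from $N(v)$ (otherwise one enters at $u$, traverses $C$ the long way, and exits at $v$, producing $P_{2\ell}$), which yields $d(u)+d(v)\le \ell-2$ and hence bounds the degree of every vertex of $R$. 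Third, all pairs in $N(u)^+\times N(v)^+$ (plus one extra pair, using $N(u)^-\setminus N(u)^+\neq\emptyset$) are non-edges, so at least $d(u)d(v)+1$ chords of $C$ are missing. Combining,
\[
|E(G)| \le (\ell-1)^2 - \bigl(d(u)d(v)+1\bigr) + d(u)(a-\ell+1) + d(v)(b-\ell+1),
\]
which is maximized at $d(u)=1$, $d(v)=\ell-3$ and equals $(\ell-3)b+a+1$, strictly below the hypothesis since $b\ge\ell$. Without the no-edges-off-$C$ fact, the degree-sum bound, and the missing-chord count, the inequality you need is simply unavailable, so as written the proposal does not prove the claim.
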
 
\begin{proof}
Assume, to the contrary, that $G$ contains a cycle $C = x_1x_2\cdots x_{2\ell-2}$ of length $2\ell - 2$.  
Since $G$ is connected and $P_{2\ell}$-free, there can be no edge in $G - C$; otherwise, such an edge together with $C$ would yield a path on at least $2\ell$ vertices by connectivity.  

Let $u \in A \setminus V(C)$ and $v \in B \setminus V(C)$ be vertices of maximum degree in their respective color classes.  
Then $(N(u)^+ \cup N(u)^-) \cap N(v) = \emptyset$, implying $d(u) + d(v) \le \ell - 2$.  
Furthermore, no edge can join $N(u)^+$ to $N(v)^+$ or $N(u)^-$ to $N(v)^-$, as this would create a copy of $P_{2\ell}$.  
Since $d(u) \le \ell - 2$, we also have $N(u)^- \setminus N(u)^+ \neq \emptyset$.  

It follows that the number of missing edges within $C$ is at least $d(u)d(v) + 1$, and therefore
\[
|E(G)| \le (\ell - 1)^2 - (d(u)d(v) + 1) + d(u)(a - \ell + 1) + d(v)(b - \ell + 1).
\]
The expression on the right is maximized when $d(u) = 1$ and $d(v) = \ell - 3$, yielding
\[
|E(G)| \le (\ell - 3)b + a + 1 < b(\ell - 2) + a - (\ell - 2),
\]
a contradiction.
\end{proof}

If $\ell \le 4$, the statement is immediate, since every $2$-connected block either contains a $C_6$ or is isomorphic to $K_{t,2}$.  
Hence, we may assume $\ell \ge 5$.  

If $G$ is $2$-connected, then, as $(\ell - 2)b + a - (\ell - 2) \ge (\ell - 3)b + 2(a - (\ell - 3))$, Theorem~\ref{Thm:2_connected_bipartite_Cycle_free} implies the existence of a cycle of length at least $2\ell - 2$, contradicting Claim~\ref{claim:2l-2-free}.

Let $G'$ be a minimal subgraph of $G$ with color classes of sizes satisfying 
$\ell - 1 \le a'$ and $a', \ell \le b'$, and such that 
\[
|E(G')| \ge (\ell - 2)b' + a' - (\ell - 2).
\]
The graph $G'$ is obtained from $G$ by successively removing all vertices of degree one and, whenever possible, a pair of vertices—one from each color class whose degree sum is at most $\ell - 1$, as long as the resulting graph remains connected.

If $a' = \ell - 1 < b'$, then if $G'$ is $2$-connected, Theorem~\ref{Thm:2_connected_bipartite_Cycle_free} implies that $G'$ contains a cycle of length at least $2\ell - 2$, since $(\ell - 2)b' + 1 \ge (\ell - 3)b' + 2 \cdot 2,$
contradicting Claim~\ref{claim:2l-2-free}.
If $G'$ is not $2$-connected, then the maximum number of edges in a bipartite graph that is not $2$-connected with color class sizes $\ell - 1$ and $b' > \ell$ is attained by the graph $K_{\ell - 2, b'}$ with an additional pendant edge.
Hence $G' \cong H' \in \mathcal{B}_1(a', b', 2\ell)$, which in turn implies that $G \cong H \in \mathcal{B}_1(a, b, 2\ell)$.

If $a' = b' = \ell$, then if $G'$ is $2$-connected, Theorem~\ref{Thm:2_connected_bipartite_Cycle_free} again guarantees a cycle of length at least $2\ell - 2$, since $(\ell - 2)b' + 2 \ge (\ell - 3)b' + 2 \cdot 3$, a contradiction. 
If $G'$ contains a vertex of degree one, we may remove this vertex and apply the previous argument. 
Otherwise, the number of edges in $G'$ satisfies $|E(G')| \le (\ell - 2)(\ell - 1) + 4 < (\ell - 2)\ell + 2,$ which contradicts the assumed lower bound.

From this point onward, we may assume that $\delta(G') \ge 2$ and that no pair of vertices from opposite color classes has a degree sum at most $\ell - 1$ whose simultaneous removal leaves $G'$ connected.

Let the longest cycle $C_{2x}$ in $G'$ lie in the block $B_1$, for some $x \le \ell - 2$.  
Note that $x\geq \floor{\frac{\ell+1}{2}}$, otherwise by Theorem~\ref{thm:jackson} we have
\[|E(G')|\leq \left(a+b- 2\floor{\frac{\ell+1}{2}}+3\right)\floor{\frac{\ell-1}{2}} <(\ell-2)b+a-(\ell-2)\]
a contradiction.
All remaining blocks are trivially $C_{2(\ell - x + 1)}$-free.  If all remaining blocks are $C_{2(\ell - x)}$-free, then applying Theorem~\ref{Thm:2_connected_bipartite_Cycle_free} to each block yields
\begin{equation}\label{Eq:bound}
|E(G')|\leq x\bigl(b' - (\ell - x - 2)\bigr) + (\ell - x - 1)(a' - x) 
   \le (\ell - 2)b' + a' - (\ell - 2).
\end{equation}
Equality in~\eqref{Eq:bound} occurs only when $x = \ell - 2$, in which case $G'$ coincides with $K_{\ell - 2,\, b'}$ with pendant vertices attached.  Trivially, we have $G \cong  H\in \B_1(a,b,2\ell)$.
For all $x < \ell - 2$, the inequality is strict, giving a contradiction.
The intuition behind~\eqref{Eq:bound} is that each $2$-connected block satisfies the bound from Theorem~\ref{Thm:2_connected_bipartite_Cycle_free}, and merging two blocks increases the total number of edges whenever their larger color classes lie on the same side of the bipartition.  

We may assume, otherwise the proof is complete, that $G'$ contains two cycles of lengths $2x$ and $2(\ell - x)$ sharing a single cut vertex. 
Let us refer to the two color classes of $G'$ as the \emph{red} and \emph{blue} classes, and assume that the shared cut vertex  $v_r$ lies in the red class.  
There exist paths with $2\ell - 1$ vertices whose terminal vertices lie in the blue class and whose interiors pass through vertices of both cycles, traversing the cut vertex that connects them.
One of the paths terminates at a vertex $v_b$ which is a consecutive vertex of $v_r$ along the cycle of length $2(\ell - x)$. 
It is straightforward to observe that $d_{G'}(v_b) \le \ell - x$, and since all its neighbors lie on the cycle—and therefore within the same block, $v_b$ cannot be a cut vertex; otherwise, $G$ would contain a copy of a $P_{2\ell}$, a contradiction.

Now, add a new vertex in the blue class adjacent to all the vertices of the red class.  
The resulting graph is $2$-connected since there are no degree one vertices in $G'$.
By Theorem~\ref{Thm:2_connected_bipartite_Cycle_free}, there is a cycle of length at least $2\ell$.  
Consequently, the original graph $G'$ must contain a path $P^r_{2\ell - 1}$ vertices with both endpoints in the red class. Note that the terminal vertices are not cut vertices.   

The path $P^r_{2\ell - 1}$ must contain at least one vertex from the cycle $C_{2x}$ distinct from $v_r$.
Otherwise, by combining all vertices of $C_{2x}$ with at least $\ell$ vertices from $P^r_{2\ell - 1}$, we would obtain a path on $2\ell$ vertices, since $x \ge \left\lfloor \tfrac{\ell + 1}{2} \right\rfloor$.
Note that if $P^r_{2\ell - 1}$ passes through $v_r$, then $v_r$ cannot be its middle vertex when $\ell$ is even, as it belongs to the same color class as the terminal vertices of $P^r_{2\ell - 1}$.

Furthermore, $P^r_{2\ell - 1}$ does not contain any vertex from the block containing $v_b$, except possibly $v_r$.
Otherwise, by a parity argument, one of the subpaths of $P^r_{2\ell - 1}$ starting at $v_r$ together with a subpath of either $C_{2x}$ or $C_{2\ell - 2x}$ starting at $v_r$ would contain at least $2\ell$ vertices, a contradiction.

If at least one terminal vertex of $P^r_{2\ell - 1}$ has degree at most $x - 1$, then this vertex together with $v_b$ form a pair of vertices in different color classes and distinct $2$-connected components whose degree sum is at most $\ell - 1$, contradicting the defining property of $G'$, since removal of these two vertices maintains $G'$ connected.

If $P^r_{2\ell - 1}$ lies entirely within a single $2$-connected block, then by Lemma~\ref{Lemma_Jackson}, $G$ contains a cycle of length at least $\min\{2\ell - 2, 4x - 4\} > 2x$, a contradiction.
Finally, if this is not the case, then there exists a path of length at least $2x + 1$ from an endpoint of $P^r_{2\ell - 1}$ to $v_r$.  
Indeed, since the terminal vertex of $P^r_{2\ell - 1}$ lies outside the block $B_1$ containing $C_{2x}$ and has degree at least $x$, the subpath leading toward $B_1$ must traverse at least $2x$ vertices before reaching the cut vertex connecting to $B_1$.  
As this entry point is distinct from $v_r$, the resulting path includes at least one additional vertex within $B_1$, namely $v_r$.  
Consequently, we obtain a path in $G$ containing at least $2\ell$ vertices, contradicting the assumption that $G$ is $P_{2\ell}$-free.

\end{proof}

\section{Applications of the Main Theorem}\label{sec:Applications}
In this section, we show that Theorems~\ref{Thm:2_connected_bipartite_Cycle_free} and~\ref{Thm:Connected_Path_Bipartite} are not only useful for further developments but also provide short and unified proofs of several classical results.
In particular, we rederive the theorems of Gyárfás, Rousseau, and Schelp~\cite{gyarfas1984extremal}, who determined the exact bipartite Turán numbers for paths without requiring connectivity, by applying Theorem~\ref{Thm:Connected_Path_Bipartite}.
We also note that Jackson~\cite{jackson1985long} determined the bipartite Turán number for long cycles, a result that can likewise be recovered as a corollary of our Theorem~\ref{Thm:2_connected_bipartite_Cycle_free}.

Furthermore, our results form a basis for new results in this area.
In a forthcoming work currently in preparation~\cite{bai2025}, Bai, Li, Patkós, Salia, and Yu apply our results for paths to derive new lower bounds on the maximum ratio between the classical and connected bipartite Turán numbers for general trees.

\begin{theorem}[Gyárfás, Rousseau, and Schelp~\cite{gyarfas1984extremal}]\label{thm:GRS}
Let $a \le b$, then
\[
  \ex_b(a,b, P_{2\ell+2}) =
  \begin{cases}
     ab, & \text{if } a \le \ell, \\[4pt]
     b\ell, & \text{if } \ell + 1 \le a \le 2\ell, \\[4pt]
     (a + b - 2\ell)\ell, & \text{if } a \ge 2\ell,
  \end{cases}
\]
and
\[
  \ex_b(a,b, P_{2\ell+3}) =
  \begin{cases}
     ab, & \text{if } a \le \ell \text{ or } a = b = \ell + 1, \\[4pt]
     a + (b - 1)\ell, & \text{if } \ell + 1 \le a < 2(\ell + 1) \text{ and } b \neq \ell + 1, \\[4pt]
     2(\ell + 1)^2, & \text{if } a = b = 2(\ell + 1), \\[4pt]
     (a + b - 2\ell)\ell, & \text{if } a \ge 2(\ell + 1) \text{ and } b \neq 2(\ell + 1).
  \end{cases}
\]
\end{theorem}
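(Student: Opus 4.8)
The plan is to deduce the unconstrained statement from the connected one in Theorem~\ref{Thm:Connected_Path_Bipartite} by a component decomposition. The lower bounds are the explicit constructions: for $P_{2\ell+2}$ one takes $K_{\ell,b}$ together with $a-\ell$ isolated vertices in the medium range, and the two-block graph $K_{a-\ell,\ell}\cup K_{\ell,b-\ell}$, with the two sides of size $\ell$ placed on opposite color classes, in the large range; the analogous $P_{2\ell+3}$ constructions additionally use copies of the balanced graph $K_{\ell+1,\ell+1}$, which is $P_{2\ell+3}$-free but not $P_{2\ell+2}$-free. Each candidate is easily checked to avoid the forbidden path and to have the claimed number of edges, so the whole content lies in the matching upper bound.

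For the upper bound, let $G=G(A,B)$ be extremal and decompose it into connected components $G_1,\dots,G_r$, where $G_i$ meets $A$ in $a_i$ vertices and $B$ in $b_i$ vertices, so that $\sum a_i=a$ and $\sum b_i=b$. Put $s_i=\min\{a_i,b_i\}$ and $t_i=\max\{a_i,b_i\}$. If $s_i\le\ell$, or $a_i=b_i=\ell+1$ in the odd case, then the complete bipartite graph on the two sides is already path-free, so $|E(G_i)|\le s_i t_i$; if $s_i\ge k/2$, then Theorem~\ref{Thm:Connected_Path_Bipartite} applies and bounds $|E(G_i)|$ by its connected extremal value. Summing over $i$ converts the problem into a discrete optimization: maximize $\sum_i f(a_i,b_i)$ over all partitions of $(a,b)$ into component part-sizes, where $f$ is the relevant per-component bound, while tracking the orientation of each block, i.e.\ which of its two sides lies in $A$. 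I would then argue by an exchange argument that the maximum is attained by a configuration consisting of at most two nontrivial complete bipartite blocks together with isolated vertices, each such block having its smaller side of size $\ell$ (the largest value keeping it path-free): the bilinearity of $s_i t_i$ discourages splitting a block, and balancing the two totals $a$ and $b$ dictates how the two blocks are oriented. Evaluating $\sum_i f$ on these finitely many candidate configurations and comparing across the ranges of $a$ reproduces the stated values.

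The main obstacle is the odd case $P_{2\ell+3}$, where two features complicate the optimization. First, because $K_{\ell+1,\ell+1}$ is path-free, there are genuinely exceptional optima, realized by one copy of $K_{\ell+1,\ell+1}$ when $a=b=\ell+1$ and by two disjoint copies when $a=b=2(\ell+1)$; these are exactly the cases carved out in the statement, and they force the side conditions $b\neq\ell+1$ and $b\neq 2(\ell+1)$ in the remaining ranges. Second, there is a gap in the per-component bounds at $s_i=\ell+1$ with $a_i\neq b_i$: here the complete bipartite graph is no longer $P_{2\ell+3}$-free, yet the smaller side falls just short of the hypothesis $s_i\ge k/2$ of Theorem~\ref{Thm:Connected_Path_Bipartite}, so a separate, direct estimate for connected $P_{2\ell+3}$-free graphs with a side of size $\ell+1$ is needed to close the argument. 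I expect most of the work to be a careful verification that, away from the balanced exceptions, every block with smaller side $\ell+1$ can be exchanged for a cheaper or equal configuration, so that the generic values $a+(b-1)\ell$ and $(a+b-2\ell)\ell$ dominate. By contrast, the even case $P_{2\ell+2}$ has no balanced exception and no such gap, and should follow from the same optimization with markedly less case analysis.
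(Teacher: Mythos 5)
Your proposal follows essentially the same route as the paper's proof: decompose an extremal graph into connected components, bound each component either trivially (smaller side at most $\ell$, or the balanced $K_{\ell+1,\ell+1}$) or via Theorem~\ref{Thm:Connected_Path_Bipartite}, and then pin down the optimal configuration by merging/exchange arguments, with the balanced components producing exactly the exceptional odd cases. The boundary issue you flag—components whose smaller side is exactly $\ell+1$ with unequal part sizes, where neither the complete-bipartite bound nor the hypothesis $a \ge k/2$ of Theorem~\ref{Thm:Connected_Path_Bipartite} applies—is genuine, and the paper's own proof passes over it silently, so your plan to insert a separate direct estimate there is, if anything, more careful than the published argument.
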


\begin{proof}[Proof using Theorem~\ref{Thm:Connected_Path_Bipartite}]
We present the argument for $\ex_b(a,b, P_{2\ell+3})$, as the even case is analogous.  
Let $G(A,B) = G_1(A_1,B_1) \cup \cdots \cup G_t(A_t,B_t)$ be an extremal $P_{2\ell+3}$-free bipartite graph with color classes of sizes $a$ and $b$, where each $G_i$ is a connected component.  
If $a \le \ell$ or $a = b = \ell + 1$, the statement is trivial.  
If $G$ is connected, then by Theorem~\ref{Thm:Connected_Path_Bipartite},
\[
e(G) \le b\ell + a - \ell,
\]
as required.  
Hence, we may assume $G$ has at least two components.

Since $G$ is extremal, each component $G_i$ has the same structural type as either $B_1(a_i, b_i, 2\ell + 3)$ or $K_{x,y}$ with $x \le \ell + 1$.  

If for some $i<j$ we have  $|A_i| = a_i \le |B_i| = b_i$ and $|A_j| = a_j \le |B_j| = b_j$, then replacing $G_i \cup G_j$ by a single copy of $B_1(a_i + a_j, b_i + b_j, 2\ell + 3)$ increases the number of edges unless $G_i = G_j = K_{\ell + 1, \ell + 1}$.  
If a third component $G_k(A_k,B_k)$ exists, then replacing $G_i \cup G_j \cup G_k$ by $B_1(2(\ell + 1) + a_k, 2(\ell + 1) + b_k, 2\ell + 3)$ again increases $|E(G)|$, a contradiction.  
Hence $G \cong 2K_{\ell + 1, \ell + 1}$ in this case.

If $|A_i| = a_i \le |B_i| = b_i$ but $|A_j| = a_j > |B_j| = b_j$, then $G_i$ and $G_j$ again have the structure of either $B_1(a,b,2\ell+3)$ or $K_{x,y}$ with $x \le \ell + 1$.  
If $G_i$ has a leaf in $A_i$, we can transfer it to $A_j$ in $G_j$, increasing the edge count—a contradiction.  
Hence, both $G_i$ and $G_j$ must be complete bipartite.  
Without loss of generality, let $a_i + a_j \le b_i + b_j$.  
We can then replace $G_i \cup G_j$ by either $K_{a_i + a_j, b_i + b_j}$ or $B_1(a_i + a_j, b_i + b_j, 2\ell + 3)$, depending on whether $a_i + a_j \le \ell$ or $\ell + 1 \le a_i + a_j < 2(\ell + 1)$.  
If $a_i + a_j \ge 2(\ell + 1)$, then replacing $G_i \cup G_j$ by 
$K_{\ell, b_i + b_j - \ell} \cup K_{a_i + a_j - \ell, \ell}$ also increases $|E(G)|$ unless 
$G_i \cup G_j$ already has this form.  
In that final case, no third component can exist, as it would again permit a denser construction.  
This completes the proof.
\end{proof}

\medskip

We also note that Jackson~\cite{jackson1985long} determined the bipartite Turán number for long cycles.

\begin{theorem}[Jackson~\cite{jackson1985long}]\label{thm:jackson}
Let $a \le b$. Then
\[
\ex_b(a,b, C_{\ge 2\ell}) =
\begin{cases}
  (b - 1)(\ell - 1) + a, & \text{if } a \le 2\ell - 2, \\[4pt]
  (a + b - 2\ell + 3)(\ell - 1), & \text{if } a \ge 2\ell.
\end{cases}
\]
\end{theorem}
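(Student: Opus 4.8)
The plan is to deduce both cases from the $2$-connected bound of Theorem~\ref{Thm:2_connected_bipartite_Cycle_free} by a block-decomposition and exchange argument, in the same spirit as the proof of Theorem~\ref{thm:GRS} above. For the lower bounds I would first record the two extremal constructions. When $a\le 2\ell-2$, take $K_{\ell-1,b}$ together with $a-(\ell-1)$ additional vertices of the class of size $a$, each joined to a single vertex of the other class; since $K_{\ell-1,b}$ has circumference $2(\ell-1)<2\ell$ and the extra vertices have degree one, the graph is $C_{\ge 2\ell}$-free and has exactly $(\ell-1)(b-1)+a$ edges. When $a\ge 2\ell$, glue $K_{\ell-1,\,b-\ell+1}$ and $K_{a-\ell+2,\,\ell-1}$ along a single vertex of the class of size $a$; each factor has circumference $2(\ell-1)$ and the shared vertex is a cut vertex, so no cycle meets both blocks and the graph is $C_{\ge 2\ell}$-free with $(\ell-1)(a+b-2\ell+3)$ edges. (For $a\le\ell-1$ the trivial $K_{a,b}$ already attains $ab$.)

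For the upper bound, let $G$ be an edge-maximal $C_{\ge 2\ell}$-free bipartite graph with color classes of sizes $a\le b$, and consider its decomposition into blocks along the block--cut-vertex tree. Since two blocks meet in at most one vertex, the edge sets of the blocks partition $E(G)$, so $|E(G)|=\sum_\beta e(\beta)$, where the class sizes of the blocks add up to $a$ and $b$ after accounting for the shared cut vertices. I would bound each block separately: a block whose smaller color class has size at most $\ell-1$ is densest when complete bipartite, contributing at most (smaller side)$\times$(larger side) edges and being automatically $C_{\ge 2\ell}$-free; a $2$-connected block with both classes of size at least $\ell$ is governed by Theorem~\ref{Thm:2_connected_bipartite_Cycle_free}. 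The first reduction is to show that no extremal configuration uses a block of the latter type: replacing a block $B_2(p,q,2\ell)$ with larger side $q$ by $K_{\ell-1,q}$ and turning the freed $p-(\ell-1)$ vertices into pendant vertices changes the edge count by $q-p+\ell-3\ge \ell-3>0$, a strict gain. Hence in an extremal graph every block is complete bipartite with smaller color class of size at most $\ell-1$, and every bridge is a single edge.

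It then remains to solve a purely combinatorial optimization: over all block-trees whose blocks are complete bipartite graphs $K_{p_i,q_i}$ with $\min(p_i,q_i)\le \ell-1$ and whose class sizes sum to $a$ and $b$, maximize $\sum_i p_iq_i$. Here I would run the merging argument used for Theorem~\ref{thm:GRS}: whenever two blocks have their larger color classes on the same side of the bipartition, consolidating them into a single complete bipartite graph and re-attaching the surplus vertices as pendants does not decrease the number of edges. This forces every extremal configuration into at most two heavy blocks, one $B$-heavy block $K_{\ell-1,\cdot}$ and one $A$-heavy block $K_{\cdot,\ell-1}$, together with pendant vertices. Comparing the two resulting candidates, the single-block value $(\ell-1)(b-1)+a$ and the two-block value $(\ell-1)(a+b-2\ell+3)$, their difference equals $(\ell-2)(a-2\ell+2)$, which is nonpositive exactly when $a\le 2\ell-2$ and positive once $a$ is larger; this pins down the claimed maximum in each range and yields the stated extremal graphs.

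The main obstacle I anticipate is the bookkeeping in the merging step rather than any single inequality. One must track precisely how each consolidation affects the additive ``$-(\ell-1)$'' corrections coming from the shared cut vertices, verify that a block whose smaller side is strictly less than $\ell-1$ is always improved by enlarging that side to $\ell-1$ (possibly at the cost of pendant vertices elsewhere), and handle the orientation of each block, namely which color class plays the small side. A secondary nuisance is the boundary value $a=2\ell-1$, excluded from the statement precisely because there the two-block configuration already overtakes the single-block one; I would remark on this explicitly to match the two-case formulation. Once the merging moves are shown to be monotone, uniqueness of the optimizer in each regime follows by checking that every move is strict unless the configuration is already of the asserted form.
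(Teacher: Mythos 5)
Your proposal is correct and takes essentially the same route as the paper's own (deliberately omitted) proof: the paper merely remarks that the extremal examples are not $2$-connected and that analyzing each block via Theorem~\ref{Thm:2_connected_bipartite_Cycle_free}, with reasoning paralleling the merging argument in the proof of Theorem~\ref{thm:GRS}, gives the result---precisely the block-decomposition, block-exchange, and consolidation plan you outline. Your key computations (the strict gain $q-p+\ell-3$ when replacing a block with both sides at least $\ell$, and the difference $(\ell-2)(a-2\ell+2)$ between the one-block and two-block candidates) are correct, so your write-up in effect supplies the details the paper chose to omit.
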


The extremal examples in Jackson’s theorem are not $2$-connected. Analyzing the structure of each block via Theorem~\ref{Thm:2_connected_bipartite_Cycle_free} yields a concise alternative proof of this result.  
As the reasoning closely parallels the arguments above, we omit the details.

\section*{Acknowledgments}

We thank Binlong Li for drawing our attention to Jackson’s lemma.
In our preliminary drafts, we believed the lemma was a new result, and we are grateful for his helpful remark clarifying its earlier appearance in the literature.

We thank Balázs Patkós for communicating with Marthe Bonamy, Théotime Leclere, and Timothé Picavet about our overlapping results, and we are grateful to them for the collegiality in allowing simultaneous submission of our manuscripts~\cite{Bonamy2025}.

Salia further thanks his coauthors for their kind invitation to China, which made this collaboration possible.

The research of He was supported by the Beijing Natural Science Foundation, grant 1244047, and the National Natural Science Foundation of China, grant 12401445. 
The research of Salia was supported by the National Science Centre grant 2021/42/E/ST1/00193.
The author Zhu is supported by NSFC under grant 12401445, Basic Research Program of Jiangsu Province(BK20241361), Jiangsu Funding Program for Excellent Postdoctoral Talent(2024ZB179).

\bibliographystyle{abbrv}
\bibliography{references.bib}

\end{document}